\documentclass[reqno,12pt]{amsart}
\usepackage{amssymb,amsmath,amsthm,amsxtra,calc,bm, color}
\usepackage{enumerate}
\usepackage[margin=.95in]{geometry}

\usepackage[scr=boondoxo]{mathalfa}
\usepackage{mathtools}

\usepackage{mleftright}
\mleftright

\usepackage{nccmath}
\usepackage{cases}

\usepackage{calc}
\usepackage{graphicx}

\usepackage{hyperref}

\usepackage[final]{microtype}

\def\Z{\mathbb{Z}}
\def\Q{\mathbb{Q}}
\def\R{\mathbb{R}}

\def\C{\mathbb{C}}

\DeclareMathOperator{\im}{Im}
\DeclareMathOperator{\re}{Re}

\def\SL{{\rm SL}}
\def\PSL{{\rm PSL}}

\newcommand{\pfrac}[2]{\left(\frac{#1}{#2}\right)}

\renewcommand{\pmatrix}[4]{\left(\begin{smallmatrix}#1 & #2 \\ #3 & #4\end{smallmatrix}\right)}
\renewcommand{\bar}[1]{\overline{#1}}

\DeclareMathOperator{\sgn}{sgn}

\def\ep{\varepsilon}

\newtheorem{theorem}{Theorem}[section]
\newtheorem{lemma}[theorem]{Lemma}

\newtheorem{proposition}[theorem]{Proposition}

\theoremstyle{remark}

\numberwithin{equation}{section}

\mathtoolsset{showonlyrefs}

\DeclareMathOperator{\Res}{Res}

\title{The Kohnen-Zagier formula for Maass forms for $\Gamma_0(4)$}

\date{\today}

\author{Nickolas Andersen}
\email{nick@math.byu.edu}
\address{Brigham Young University, Provo, UT 84602}

\begin{document}

\begin{abstract}
	We extend a formula of Duke, Imam\=oglu, and T\'oth (which itself is a generalization of the Katok-Sarnak formula) to prove the Kohnen-Zagier formula for Maass forms for $\Gamma_0(4)$.
\end{abstract}

\maketitle

\allowdisplaybreaks


\section{Introduction}

Let $d$ be a fundamental discriminant.
The Kohnen--Zagier formula \cite{KZ} relates the $|d|$-th coefficient of a holomorphic Hecke eigenform $g$ of half-integral weight on $\Gamma_0(4)$ to $L(\frac 12,f\times \pfrac d\cdot)$, where $f$ is the Shimura lift of $g$.
The formula is an explicit version of the general relation of Waldspurger \cite{walds}.
Here we show how the ideas of \cite{DIT-geom} and \cite{and-inf} can be combined to give a short proof of the Kohnen--Zagier formula for Maass cusp forms for $\Gamma_0(4)$.
We adopt the notation of \cite{DIT-geom}; see the next section for details.
\begin{theorem} \label{thm:main}
Let $\varphi$ be an even Hecke--Maass cusp form of weight $0$ for $\SL_2(\Z)$ with Fourier expansion
\begin{equation}
	\varphi(z) = 2\sqrt y \sum_{n\neq 0} a(n) K_{ir}(2\pi|n|y)e(nx).
\end{equation}
Then there exists a unique Maass cusp form $\psi$ of weight $1/2$ for $\Gamma_0(4)$ with Fourier expansion
\begin{equation}
	\psi(z) = \sum_{0\neq n\equiv 0,1(4)} b(n) W_{\frac 14\sgn(n),\frac{ir}{2}}(4\pi|n|y)e(nx)
\end{equation}
such that for any fundamental discriminant $d\equiv 0,1\pmod{4}$ we have
	\begin{equation} \label{eq:kohnen-zagier-formula}
	12\pi|d||b(d)|^2 = \langle \varphi,\varphi \rangle^{-1} \left|\Gamma(\tfrac 12 - \tfrac{\sgn d}{4}+\tfrac{ir}{2})\right|^2 L(\tfrac 12,\varphi\times \chi_d).
	\end{equation}
Here $\chi_d = \pfrac{d}{\cdot}$ and $L(s,\varphi\times\chi_d)$ denotes the analytic continuation of the $L$-function
\begin{equation}
	L(s,\varphi\times\chi_d) = \sum_{n=1}^\infty \frac{a(n)\chi_d(n)}{n^s}.
\end{equation}
\end{theorem}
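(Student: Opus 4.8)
The plan is to realize $\psi$ as a theta lift of $\varphi$ and then to extract both the coefficients $b(d)$ and the central value $L(\tfrac12,\varphi\times\chi_d)$ from that single lift, combining the geometric unfolding of \cite{DIT-geom} with the period evaluation of \cite{and-inf}. The starting point is a theta kernel $\Theta(\tau,z)$ attached to the lattice of integral binary quadratic forms equipped with its discriminant quadratic form of signature $(2,1)$; as a function of $\tau$ it transforms with weight $1/2$ under $\Gamma_0(4)$, and as a function of $z$ it is $\SL_2(\Z)$-invariant of weight $0$. Setting
\begin{equation}
	\psi(\tau) := \langle \varphi, \Theta(\tau,\cdot)\rangle
\end{equation}
produces a weight-$1/2$ Maass form for $\Gamma_0(4)$ with the same spectral parameter $ir/2$ as $\varphi$, cuspidal because $\varphi$ is. Since $\varphi$ is a Hecke eigenform and the correspondence is Hecke-equivariant and injective on the relevant even eigenspace, this $\psi$ is the unique such lift up to scalar, which establishes the existence and uniqueness assertion.

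Next I would unfold the defining integral to read off the Fourier expansion of $\psi$. This is exactly the geometric computation of \cite{DIT-geom}: the $d$-th coefficient collapses onto the $\SL_2(\Z)$-classes of binary quadratic forms of discriminant $d$, giving
\begin{equation}
	b(d) = c_{\sgn d}(r)\,\Tr_d(\varphi),
\end{equation}
where $\Tr_d(\varphi)$ is the twisted trace of $\varphi$ over these classes: a weighted sum of the values $\varphi(z_Q)$ at the Heegner points of discriminant $d$ when $d<0$, and a sum of the cycle integrals of $\varphi$ along the associated closed geodesics when $d>0$. The archimedean constant $c_{\sgn d}(r)$ depends only on $\sgn d$ and on $r$, and this is the first place the sign of $d$ enters; it is what ultimately produces $\tfrac{\sgn d}4$ inside the Gamma factor.

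The remaining step, supplied by \cite{and-inf}, is to evaluate $|b(d)|^2$. Squaring the period and unfolding a second time — a Rankin--Selberg, or equivalently seesaw, computation in which the two copies of the quadratic-form sum are paired against $\varphi$ and spread over $\SL_2(\Z)$ — replaces the diagonal by the Dirichlet series $\sum_n a(n)\chi_d(n)n^{-s}$, twisted by $\chi_d$ because the genus character of the discriminant-$d$ forms enters the unfolding. Taking $s=\tfrac12$ isolates $L(\tfrac12,\varphi\times\chi_d)$ times a purely archimedean integral of the weight-$0$ Whittaker function of $\varphi$ against the weight-$1/2$ data; evaluating that integral gives $|\Gamma(\tfrac12-\tfrac{\sgn d}4+\tfrac{ir}2)|^2$, while matching the theta-lift normalization of $\psi$ against the Hecke normalization of the $a(n)$ produces the factor $\langle\varphi,\varphi\rangle^{-1}$ and, after accounting for the covolume $\pi/3$ and the measures, the rational constant in \eqref{eq:kohnen-zagier-formula}.

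The hardest part will be the exact bookkeeping of constants and of the archimedean factor. One must run the real-quadratic case ($d>0$, geodesic cycle integrals) and the imaginary-quadratic case ($d<0$, Heegner point sums) in parallel, since they give genuinely different archimedean integrals — a Mehler--Fock type transform of $K_{ir}$ in one case and a sum over CM points in the other — that must nonetheless collapse into the single uniform Gamma factor $|\Gamma(\tfrac12-\tfrac{\sgn d}4+\tfrac{ir}2)|^2$. Tracking the normalizations of the lattice, the genus character, the Whittaker function $W_{\frac14\sgn(n),\frac{ir}2}$, and the Petersson measure so that no spurious powers of $2$, $\pi$, or $\sqrt{|d|}$ survive, and thereby pinning down the clean constant $12\pi$, is where the care of \cite{DIT-geom} and \cite{and-inf} is essential.
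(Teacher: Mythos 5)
Your first two steps (realizing $\psi$ as a theta lift and unfolding its $d$-th coefficient onto the discriminant-$d$ classes) reproduce the Katok--Sarnak construction, which the paper deliberately does not follow; but the decisive problem is your third step, and it is a genuine gap rather than a matter of bookkeeping. What you must evaluate is $|b(d)|^2=|c_{\sgn d}(r)|^2\,|\Tr_d(\varphi)|^2$, the \emph{square} of a finite sum of Heegner-point values (for $d<0$) or of closed-geodesic periods (for $d>0$). Such a square does not ``unfold a second time'' into the single Dirichlet series $\sum_n a(n)\chi_d(n)n^{-s}$: expanding it produces a double sum over pairs of classes, and identifying that double sum with $L(\tfrac 12,\varphi\times\chi_d)$ is exactly the Waldspurger-type period identity the theorem asserts, not something a Rankin--Selberg or seesaw unfolding hands you (there is no integral of $|\psi|^2$ against an Eisenstein series here that isolates a single coefficient $|b(d)|^2$; that construction yields averages over $d$). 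The Katok--Sarnak route in fact only gives $b(d)\bar b(1)$ in terms of a trace, and dividing by $b(1)$ breaks down whenever $L(\tfrac 12,\varphi)=0$.

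The paper circumvents this by taking the two discriminants in the Duke--Imamo\=glu--T\'oth framework to be $d$ and $d$ itself, i.e.\ by working with forms of \emph{square} discriminant $d^2$. By Lemma~\ref{lem:square} these are represented by $[0,|d|,c]$, whose cycles are infinite vertical geodesics $x=-c/|d|$, so the genus-character-twisted sum of cycle integrals of $\varphi$ (or of $i\partial_z\varphi$ when $d<0$) is \emph{linearly} equal to a Gauss sum times a Mellin transform of the Fourier expansion, hence to $\pi^{-s-1/2}\sqrt{|d|}\,\Gamma(\tfrac s2+\tfrac{ir}2+\tfrac14+\tfrac{1-\sgn d}{4})\Gamma(\tfrac s2-\tfrac{ir}2+\tfrac14+\tfrac{1-\sgn d}{4})L(s+\tfrac 12,\varphi\times\chi_d)$ --- no squaring and no second unfolding. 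The price is that cycle integrals of the Poincar\'e series over these infinite geodesics diverge, and the technical heart of the paper (Propositions~\ref{prop:integrals} and~\ref{prop:bd-bdp}) is the modification $F_{m,Q}$ dropping the two terms indexed by the endpoint cusps, together with the subtraction of $c(s)F_{0,Q}$, so that the residue identity of Proposition~\ref{prop:residue} can be applied at $s=\tfrac 12+ir$ to produce $|b_\psi(d)|^2$ on the spectral side. Neither this apparatus nor any substitute for it appears in your proposal, so as written the argument does not close.
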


The $d=1$ case of Theorem~\ref{thm:main} is a corollary of the main result of Katok and Sarnak \cite{KS}, which relates the product $b(d)\bar b(1)$ to one of the quantities
\begin{equation}
	\sum_{Q\in \Gamma\backslash \mathcal Q_d} \varphi(z_Q) \qquad \text{ or } \qquad \sum_{Q\in \Gamma\backslash \mathcal Q_d} \int_{C_Q} \varphi(z) y^{-1} \, |dz|,
\end{equation}
depending on whether $d$ is negative or positive, respectively.
Here $\mathcal Q_d$ is a set of integral binary quadratic forms of discriminant $d$, $\Gamma = \PSL_2(\Z)$, $z_Q$ is the root of $Q(z,1)$ in the complex upper half-plane $\mathcal H$, and $C_Q$ is a hyperbolic geodesic, finite if $d>1$ and infinite if $d=1$ (see Section~\ref{sec:background} for details).
In the case $d=1$ there is one term in the sum, namely $Q=[0,1,0]$, and their formula reads
\begin{equation}
	12\sqrt\pi  |b(1)|^2 =\langle \varphi,\varphi \rangle^{-1} \int_0^\infty \varphi(iy) y^{-1} \, dy.
\end{equation}
The latter integral evaluates to a multiple of $L(\frac 12,\varphi)$.
To prove their formula, Katok and Sarnak modify the theta lift of Shintani \cite{shintani} and Niwa \cite{niwa}.
With some extra work, their method can probably produce a formula for $|b(d)|^2$ where $d$ is any positive fundamental discriminant.

Using the Kuznetsov trace formula and some ideas from the proof of the Selberg trace formula, Bir\'o \cite{biro} extended the Katok--Sarnak formula to general level for a pair of positive discriminants $d$ and $d'$ such that $d$ is fundamental.
The main result of \cite{biro} is a relation between $b(d)\bar b(d')$ and the twisted sums
\begin{equation}
	\sum_{Q\in \Gamma\backslash\mathcal Q_{dd'}} \chi_d(Q) \int_{C_Q} \varphi(z) y^{-1} |dz|,
\end{equation}
where $\chi_d$ is a character of the finite group $\Gamma\backslash\mathcal Q_{dd'}$.
When $d=d'$ the sum above evaluates to a multiple of $L(\frac 12,\varphi\times\chi_d)$.
It is not clear whether the methods of \cite{KS} or \cite{biro} can be extended to cover the case where $d,d'$ are negative.

In \cite{DIT-geom}, Duke, Imamo\=glu, and T\'oth generalized the formulas of Katok--Sarnak and B\'iro in the case of Maass forms for $\Gamma_0(4)$ to allow for two discriminants $d,d'$ of any sign, as long as $d$ is fundamental.
In the new case, when $d$ and $d'$ are both negative (and $dd'$ is not a square), 
Theorem~4 of \cite{DIT-geom} gives a relation between $b(d)\bar b(d')$ and
\begin{equation}
	\sum_{Q\in \Gamma\backslash\mathcal Q_{dd'}} \chi_d(Q) \int_{\mathcal F_Q} \varphi(z) \, \frac{dxdy}{y^2},
\end{equation}
where $\mathcal F_Q$ is a finite area hyperbolic surface with boundary $C_Q$.
The case $d=d'$ is not covered in that theorem because the proof relies on being able to compute the integral over $C_Q$ of a certain Poincar\'e series, and the corresponding integral when $dd'$ is a square does not converge.
Here we use the main idea of \cite{and-inf} to modify the Poincar\'e series in the case $d=d'$ and give a short proof of Theorem~\ref{thm:main}.

The generalization of Theorem~\ref{thm:main} to Maass forms for $\Gamma_0(4N)$, with $N$ odd and squarefree, was proved by Baruch and Mao in \cite{bm}.
Their proof utilizes the powerful tools of automorphic representation theory.

\section{Background} \label{sec:background}

Throughout this paper we make use of several special functions, especially the Bessel functions $I_\nu(x)$, $J_{\nu}(x)$, and $K_{\nu}(x)$, and the Whittaker functions $M_{\mu,\nu}(x)$ and $W_{\mu,\nu}(x)$.
Definitions and properties of these functions can be found in Sections~10 and 13 of \cite{dlmf}.
In the rest of this section, we give some background information on the objects in the introduction, including some standard facts we will need for the proof of the main theorem.
We are mostly following the notation and setup of \cite{DIT-geom}.
Other standard references are \cite{fay,hejhal,roelcke}.

\subsubsection*{Maass cusp forms of weight $0$}
Let $\Gamma=\PSL_2(\Z)$ and let
\begin{equation}
	\Delta_k = y^2\left(\partial_x^2+\partial_y^2\right) - iky\partial_x
\end{equation}
denote the weight $k$ hyperbolic Laplacian.
A function $\varphi:\mathcal H\to\C$ is a Maass form of weight $0$ for $\Gamma$ if it is $\Gamma$-invariant and is an eigenfunction of $\Delta_0$ with eigenvalue normalized by $(\Delta_0+\lambda)\varphi=0$ and
\begin{equation}
	\lambda = \tfrac 14+r^2 \qquad \text{ with }r\geq 0.
\end{equation}
The quantity $r$ is called the spectral parameter of $\varphi$.
We say that $\varphi$ is a Maass cusp form if the constant term in its Fourier expansion is zero, i.e.
\begin{equation}
	\varphi(z) = 2\sqrt y \sum_{n\neq 0} a_\varphi(n) K_{ir}(2\pi|n|y) e(nx)
\end{equation}
for some coefficients $a_\varphi(n)\in \C$.
For each $r\geq 0$ let $\mathcal U_r$ denote the vector space of Maass cusp forms of weight $0$ with spectral parameter $r$.

For each prime $p$, the Hecke operator $T_p$ acts on $\mathcal U_r$ via Fourier expansions as
\begin{equation}
	(T_p \varphi)(z) = 2\sqrt y \sum_{n\neq 0} \left(a_\varphi(pn) + p^{-1}a_\varphi(n/p)\right) K_{ir}(2\pi|n|y)e(nx).
\end{equation}
The Hecke operators commute with each other and with $\Delta_0$, so we can find an orthogonal (with respect to the Petersson inner product $\langle \cdot,\cdot \rangle$) basis $\mathcal B_r$ of $\mathcal U_r$ consisting of Hecke eigenforms.
We will normalize the elements of $\mathcal B_r$, which are called Hecke--Maass cusp forms,
so that $a(1)=1$.
We can also assume that each $\varphi$ is even or odd, meaning that $a(-n)=\pm a(n)$ respectively.

\subsubsection*{Maass cusp forms of weight $\frac 12$}
A function $\psi:\mathcal H\to \C$ is a Maass form of weight $1/2$ for $\Gamma_0(4)$ if it satisfies $\psi(\gamma z) = J(\gamma,z)\psi(z)$ for all $\gamma\in \Gamma_0(4)$, where
\begin{equation}
	J(\gamma,z) = \frac{\theta^*(\gamma z)}{\theta^*(z)}, \qquad \theta^*(z) = y^{1/4} \sum_{n\in \Z}e(n^2z),
\end{equation}
and if $(\Delta_k+\lambda)\psi = 0$ for some $\lambda$.
If $\psi$ is not a constant multiple of $\theta^*$ then $\lambda\geq \frac 14$ and we define the spectral parameter $r$ as before.
Such a $\psi$ is a cusp form if the constant term in its Fourier expansion at each of the cusps of $\Gamma_0(4)\backslash \mathcal H$ is zero.
In this case the Fourier expansion is written
\begin{equation}
	\psi(z) = \sum_{n\neq 0} b_\psi(n) W_{\frac 14\sgn(n), ir}(4\pi |n|y)e(nx).
\end{equation}
Let $\mathcal V_r $ denote the vector space of Maass cusp forms of weight $1/2$ on $\Gamma_0(4)$ with spectral parameter $r/2$.\footnote{Having $\mathcal U_r$ and $\mathcal V_r$ correspond to spectral parameters $r$ and $r/2$, respectively, follows \cite{DIT-geom} and is convenient when working with the Shimura lift, which sends an element of $\mathcal V_r$ into $\mathcal U_r$.}
The Kohnen plus space is the subspace $\mathcal V_r^+$ of $\mathcal V_r$ comprising forms whose Fourier coefficients are supported on indices $n\equiv 0,1\pmod{4}$.
For each prime $p\geq 3$, the Hecke operator $T_{p^2}$ acts on $\mathcal V_r^+$ via Fourier expansions as
\begin{equation}
	(T_{p^2}\psi)(z) = \sum_{0\neq n\equiv 0,1(4)} \left( b_\psi(p^2 n) + \pfrac{n}{p}p^{-1} b_\psi(n) + p^{-1}b_\psi(n/p^2) \right)W_{\frac 14\sgn(n),\frac{ir}{2}}(4\pi|n|y)e(nx).
\end{equation}

\subsubsection*{The Shimura lift} In Theorem~1.2 of \cite{bm}, Baruch and Mao show that for each $\varphi\in \mathcal U_r$ there is a unique $\psi\in \mathcal V_r^+$, spectrally normalized so that $\langle \psi,\psi \rangle = 1$, such that for each prime $p\geq 3$ we have 
\begin{equation} \label{eq:tp^2=a-phi-p}
	T_{p^2} \psi = a_\varphi(p)\psi.
\end{equation}
The form $\varphi$ is called the Shimura lift of $\psi$.
A computation involving \eqref{eq:tp^2=a-phi-p} and $T_p\varphi = a_\varphi (p)\varphi$ shows that
\begin{equation}
	a_\varphi(m) b_\psi(d) = m \sum_{n\mid m}n^{-\frac 32} \pfrac dn b_\psi(m^2 d/n^2)
\end{equation}
for all fundamental discriminants $d$.

\subsubsection*{Quadratic forms and cycles}
For each positive discriminant $D$, let $\mathcal Q_D$ denote the set of (indefinite) integral binary quadratic forms $Q=[a,b,c]$ with $b^2-4ac=D$.
The group $\Gamma$ acts on $\mathcal Q_D$ in the usual way, and the set $\Gamma\backslash \mathcal Q_D$ is finite.
For $Q=[a,b,c]\in \mathcal Q_D$, the equation $ax^2+bxy+cy^2=0$ has two solutions $(x:y)$ in $\mathbb P^1(\R)$.
When $D$ is not a square, each $x/y$ is a real quadratic irrationality, and when $D$ is a square we have either $(x:y)=(1:0)$, corresponding to the point at $i\infty$, or $x/y\in \Q$.
Let $S_Q$ denote the geodesic in $\mathcal H$ connecting the two solutions, and let
$\Gamma_Q\subseteq \Gamma$ denote the isotropy subgroup $\{\gamma\in \Gamma : \gamma Q = Q\}$. 
We follow \cite{DIT-geom} in orienting $S_Q$ clockwise if $a>0$, counterclockwise if $a<0$, and downward if $a=0$ (if $a=0$ the geodesic is a vertical line).%
\footnote{Note that the papers \cite{DIT-cycle,and-inf} give $S_Q$ and $C_Q$ the opposite orientation.}
When $D$ is not a square, $\Gamma_Q$ is infinite cyclic, and when $D$ is a square $\Gamma_Q$ is trivial.
Let $C_Q = \Gamma_Q\backslash S_Q$ be the cycle corresponding to $Q$; it has finite length when $D$ is not a square, and infinite length otherwise.

Let $D=dd'$ be a factorization of $D$ into a fundamental discriminant $d$ and a discriminant $d'$.
The generalized genus character $\chi_d$ associated to the factorization $D=dd'$ is
\begin{equation}
	\chi_d(Q) = 
	\begin{cases}
		\pfrac dn & \text{ if }\gcd(a,b,c,d)=1\text{ and $Q$ represents $n$}, \\
		0 & \text{ if }\gcd(a,b,c,d)>1.
	\end{cases}
\end{equation}
In \cite{GKZ} it is shown that $\chi_d(Q)$ is well-defined on equivalence classes $Q\in \Gamma\backslash \mathcal Q_D$.

It will be helpful to have an explicit description of $\Gamma\backslash \mathcal Q_D$ when $D=d^2$ and $d$ is a fundamental discriminant.
The following is a straightforward generalization of Lemma~3 of \cite{and-inf}.

\begin{lemma} \label{lem:square}
	If $D=d^2$ then the sets
	\begin{equation}
		\{Q=[c,|d|,0] : 0\leq c < |d|\} \qquad \text{ and }\qquad \{Q=[0,|d|,c] : 0\leq c < |d|\}
	\end{equation}
	are both complete sets of representatives for $\Gamma\backslash \mathcal Q_D$.
	In both cases we have
	\begin{equation}
		\chi_d(Q) = \pfrac{d}{c}.
	\end{equation}
\end{lemma}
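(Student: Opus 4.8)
The plan is to translate the problem into the geometry of the two roots. Since $D=d^{2}$ is a perfect square, the equation $Q(x,1)=0$ has two distinct solutions in $\mathbb{P}^{1}(\Q)$, and sending $Q=[a,b,c]$ to this ordered pair of roots gives a $\Gamma$-equivariant bijection between $\mathcal Q_{d^{2}}$ and ordered pairs of distinct points of $\mathbb{P}^{1}(\Q)$, on which $\Gamma=\PSL_2(\Z)$ acts diagonally by M\"obius transformations. Under this dictionary the orientation convention (the sign of $a$, equivalently the sign of the middle coefficient once a root is sent to $i\infty$) records which point comes first. A short computation identifies $[0,|d|,c]$ with the ordered pair $(-c/|d|,\,i\infty)$ and $[0,-|d|,c]$ with $(i\infty,\,c/|d|)$, so forcing the second root to be $i\infty$ pins the middle coefficient to be exactly $+|d|$.

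For existence I would start from an arbitrary $Q\in\mathcal Q_{d^{2}}$, use transitivity of $\Gamma$ on the cusps $\mathbb{P}^{1}(\Q)$ to move the second root to $i\infty$, obtaining a representative $[0,|d|,c_{1}]$, and then apply a power of $T=\pmatrix{1}{1}{0}{1}$ (which fixes $i\infty$ and shifts $c_{1}$ by multiples of $|d|$) to arrange $0\le c<|d|$. Reading off the genus character is then immediate from the definition: $[0,|d|,c]$ represents $c=Q(0,1)$, and $\gcd(0,|d|,c,d)=\gcd(c,d)$, so when $\gcd(c,d)=1$ we get $\chi_d=\pfrac{d}{c}$, while when $\gcd(c,d)>1$ both $\chi_d$ and the Kronecker symbol $\pfrac{d}{c}$ vanish.

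The crux is injectivity, that distinct $c\in\{0,\dots,|d|-1\}$ give inequivalent forms. Here I would represent each root by a primitive integer vector, so $i\infty\leftrightarrow\binom{1}{0}$ and $-c/|d|$, written in lowest terms as $-c'/e$ with $g=\gcd(c,|d|)$, $e=|d|/g$, $c'=c/g$, corresponds to $\binom{-c'}{e}$, and extract a complete $\Gamma$-invariant of the ordered pair. After normalizing the second vector to $\binom{1}{0}$, the stabilizer $\{\pm T^{k}\}$ acts on the first vector, and once its lower entry is made positive the residue of its upper entry modulo $e$ becomes a well-defined invariant; together with $e=|d|/g$ this recovers $c\bmod|d|$, hence $c$, exactly. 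The main obstacle is getting this orientation bookkeeping right: a careless treatment would only see the pair up to an overall sign and would wrongly glue $c$ with $-c$, whereas the oriented (ordered-pair) structure, with the lower entry normalized positive, keeps the classes separate; this is exactly what the direct check $[0,5,1]\not\sim[0,5,4]$ confirms.

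Finally, the second set follows from the first by the involution $Q(x,y)\mapsto Q(y,x)$, that is $[a,b,c]\mapsto[c,b,a]$, realized by the anti-diagonal element of $\GL_2(\Z)$, which carries $[0,|d|,c]$ to $[c,|d|,0]$. This element preserves $\mathcal Q_{d^{2}}$ and normalizes $\Gamma$, so it permutes the $\Gamma$-classes bijectively; applying it to the transversal $\{[0,|d|,c]\}$ yields the transversal $\{[c,|d|,0]\}$. The value $\chi_d([c,|d|,0])=\pfrac{d}{c}$ is verified exactly as before, since $[c,|d|,0]$ represents $c=Q(1,0)$ and again $\gcd(c,|d|,0,d)=\gcd(c,d)$.
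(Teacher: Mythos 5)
The paper contains no proof of this lemma to compare against --- it is dispatched with the single line that it is ``a straightforward generalization of Lemma~3 of \cite{and-inf}.'' Your argument is the natural one and is, in substance, correct: pass to the ordered root data, normalize one root to $i\infty$ to force $a=0$ and $b=\pm|d|$, use the residual $T$-action to put $c$ into $[0,|d|)$, separate classes by the invariant $(q,\,p\bmod q)$ attached to the remaining root $p/q$, obtain the second transversal from the determinant~$-1$ involution $[a,b,c]\mapsto[c,b,a]$, and read off the character values from $Q(0,1)=c$ (resp.\ $Q(1,0)=c$) together with $\gcd(0,|d|,c,d)=\gcd(c,d)$.

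Two points need repair. First, the opening claim that $Q\mapsto(\text{ordered pair of roots})$ is a \emph{bijection} onto ordered pairs of distinct points of $\P^1(\Q)$ is false: the map is injective but not surjective. Writing $Q=\lambda(q_1x-p_1y)(q_2x-p_2y)$ with primitive linear factors forces $\lambda\in\Z$ (Gauss's lemma) and $\lambda(p_1q_2-p_2q_1)=\pm|d|$, so only pairs whose associated determinant divides $|d|$ arise; for instance, when $d=5$ no form of discriminant $25$ has roots $\{0,\,2/5\}$, since such a form would have to be $\pm\tfrac52(5x^2-2xy)$. You only ever use injectivity and equivariance, so this is a misstatement rather than a fatal gap, but it must be corrected. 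Second, the genuinely load-bearing step in your injectivity argument is that the \emph{ordering} of the roots is $\Gamma$-equivariant, and you justify it by appeal to the paper's orientation convention. That convention (``downward if $a=0$'') is not literally equivariant on the degenerate forms $[0,-|d|,c]$, whose equivariantly oriented geodesic points upward; what your computation actually uses is the algebraic ordering $w_\pm=(-b\pm\sqrt{D})/(2a)$, projectively $(2c:-b\mp\sqrt{D})$, for which $w_\pm(\gamma Q)=\gamma\,w_\pm(Q)$ is the standard check. Once that choice is made explicit, your invariant $\bigl(e,\,-c'\bmod e\bigr)$ with $e=|d|/\gcd(c,|d|)$ does recover $c$ exactly, and the lemma follows.
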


\section{Proof of Theorem~\ref{thm:main}}
We begin by borrowing a few intermediate results from \cite{DIT-geom}. 
For $\re(s)>1$ let $F_m(z,s)$ denote the Poincar\'e series
\begin{equation}
	F_{m}(z,s) = \sum_{\gamma\in \Gamma_\infty \backslash \Gamma} f_m(\gamma z,s),
\end{equation}
where $f_0(z,s) = y^s$ and for $m\neq 0$
\begin{equation}
	f_m(z,s) = \frac{\Gamma(s)}{2\pi\sqrt{|m|}\Gamma(2s)} M_{0,s-\frac 12}(4\pi|m|y) e(mx).
\end{equation}
The function $F_0(z,s)$ is the usual real analytic Eisenstein series (see \cite[Chapter~15]{IK}) and has Fourier expansion
\begin{equation} \label{eq:eis-fourier}
	F_0(z,s) = y^s + \frac{\Lambda(2s-1)}{\Lambda(2s)}y^{1-s} + 2\sqrt y\sum_{n\neq 0} \frac{|n|^{s-\frac 12}\sigma_{1-2s}(|n|)}{\Lambda(2s)} K_{s-\frac 12}(2\pi|n|y)e(nx),
\end{equation}
where $\sigma_a(n)$ is the sum of the $a$-th powers of the divisors of $n$, and $\Lambda(s) = \pi^{-s/2}\Gamma(s/2)\zeta(s)$.
The modified Eisenstein series $\Lambda(2s)F_0(z,s)$ is analytic in $\C\setminus\{0,1\}$ and is invariant under $s\mapsto 1-s$.
For $m\neq 0$ the Fourier expansion of $F_m(z,s)$ is given in Theorem~3.4 of \cite{fay} (see also Section~8 of \cite{DIT-geom}); for $m\neq 0$ and $\re(s)>1$ we have
\begin{equation} \label{eq:fm-fourier}
	F_m(z,s) = f_m(z,s) + \frac{2|m|^{1/2-s}\sigma_{2s-1}(|m|)}{(2s-1)\Lambda(2s)}y^{1-s} + 2\sqrt y \sum_{n\neq 0} \Phi(m,n;s) K_{s-\frac 12}(2\pi|n|y)e(nx),
\end{equation}
where
\begin{equation}
	\Phi(m,n;s) = \sum_{c>0} \frac{K(m,n,c)}{c} 
	\begin{cases}
		I_{2s-1}(4\pi \sqrt{|mn|} \, c^{-1}) & \text{ if } mn<0, \\
		J_{2s-1}(4\pi \sqrt{|mn|} \, c^{-1}) & \text{ if } mn>0 
	\end{cases}
\end{equation}
and $K(m,n,c)$ is the ordinary (weight 0) Kloosterman sum.
The following result is Proposition~3 of \cite{DIT-geom}.
\begin{proposition} \label{prop:residue}
	For any $m\neq 0$, the function $F_m(z,s)$ has a meromorphic continuation to $\re(s)>0$ with
	\begin{equation}
		\Res_{s=\frac 12+ir} (2s-1)F_m(z,s) = \sum_{\varphi\in \mathcal B_r} \langle \varphi,\varphi \rangle^{-1} 2a_\varphi(m) \varphi(z).
	\end{equation}
\end{proposition}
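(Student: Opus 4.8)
The plan is to obtain the residue from the spectral expansion of the Poincar\'e series, by pairing $F_m(\cdot,s)$ against Hecke--Maass cusp forms and isolating its poles at $s=\tfrac12+ir$. For $\re(s)>1$ the constant term of $F_m(\cdot,s)$ is the $y^{1-s}$ term in \eqref{eq:fm-fourier}, so $F_m(\cdot,s)\in L^2(\Gamma\backslash\mathcal H)$ and admits a spectral decomposition into the discrete (cuspidal and constant) and continuous (Eisenstein) parts. For a Hecke--Maass cusp form $\varphi$ with spectral parameter $r_\varphi$, I would compute the cuspidal inner products by unfolding against $F_m=\sum_{\gamma\in\Gamma_\infty\backslash\Gamma}f_m(\gamma z,s)$,
\[
\langle F_m(\cdot,s),\varphi\rangle=\int_0^\infty\!\!\int_0^1 f_m(z,s)\,\overline{\varphi(z)}\,\frac{dx\,dy}{y^2}.
\]
The $x$-integral selects the $n=m$ Fourier mode of $\varphi$ (using that the Hecke eigenvalues are real), leaving
\[
\langle F_m(\cdot,s),\varphi\rangle=\frac{\Gamma(s)\,a_\varphi(m)}{\pi\sqrt{|m|}\,\Gamma(2s)}\int_0^\infty M_{0,s-\frac12}(4\pi|m|y)\,K_{ir_\varphi}(2\pi|m|y)\,\frac{dy}{y^{3/2}}.
\]

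Next I would evaluate the $y$-integral. Rescaling and using $M_{0,\mu}(2u)=2^{2\mu}\Gamma(\mu+1)\sqrt{2u}\,I_\mu(u)$ reduces it to $\int_0^\infty I_{s-\frac12}(u)\,K_{ir_\varphi}(u)\,u^{-1}\,du$, which a standard Bessel integral (Gradshteyn--Ryzhik~6.576.4, with Gauss's evaluation of the resulting ${}_2F_1$ at $1$) collapses to $\big((s-\tfrac12)^2+r_\varphi^2\big)^{-1}$. After applying the Legendre duplication formula to the Gamma factors, the whole expression simplifies to the clean identity
\[
\langle F_m(\cdot,s),\varphi\rangle=\frac{2\,a_\varphi(m)}{(s-\tfrac12)^2+r_\varphi^2},
\]
which is meromorphic in $s\in\C$ with simple poles exactly at $s=\tfrac12\pm ir_\varphi$. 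Inserting this into the spectral expansion gives, for $\re(s)>1$,
\[
F_m(z,s)=\sum_{\varphi}\frac{2\,a_\varphi(m)}{(s-\tfrac12)^2+r_\varphi^2}\,\frac{\varphi(z)}{\langle\varphi,\varphi\rangle}+(\text{constant term})+(\text{Eisenstein integral}).
\]
Multiplying by $(2s-1)=2(s-\tfrac12)$ and taking the residue at $s=\tfrac12+ir$ extracts from each discrete term the factor $2a_\varphi(m)$, and only those $\varphi$ with $r_\varphi=r$ contribute, yielding the asserted formula.

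The main obstacle is analytic rather than computational: I must justify that the meromorphic continuation of $F_m(z,s)$ to $\re(s)>0$ may be carried out termwise in the spectral expansion, and, crucially, that neither the continuous nor the residual spectrum produces a pole at $s=\tfrac12+ir$ for $r>0$. The constant-function term contributes a factor $\big((s-\tfrac12)^2+\tfrac14\big)^{-1}$, with poles only at $s=0,1$, while the Eisenstein contribution is an integral of $\big((s-\tfrac12)^2+t^2\big)^{-1}$ against $E(z,\tfrac12+it)$ over $t\in\R$, which continues holomorphically past the line $\re(s)=\tfrac12$ without crossing a discrete pole. Establishing these facts---uniform convergence of the spectral sum and the legitimacy of shifting the $t$-contour---is exactly the input one borrows from the resolvent theory of $\Delta_0$: the poles of $F_m(z,s)$ in $\re(s)>0$ sit precisely at the discrete eigenvalue parameters $\tfrac12+ir_j$, with residues projecting onto the corresponding eigenspaces.
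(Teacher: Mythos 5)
Note first that the paper does not prove this statement at all: it is quoted verbatim as Proposition~3 of \cite{DIT-geom}, so your argument has to be measured against what a correct proof actually requires. Your overall strategy (spectral expansion plus unfolding) is the right one, and your computation of the cuspidal projection is sound: the unfolded integral converges because $K_{ir_\varphi}(2\pi|m|y)$ decays exactly fast enough to absorb the growth of $M_{0,s-\frac12}(4\pi|m|y)$, and the Bessel integral does collapse to $2a_\varphi(m)\big((s-\tfrac12)^2+r_\varphi^2\big)^{-1}$, which yields the claimed residue $2a_\varphi(m)$ upon multiplying by $2s-1$.

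The genuine gap is your opening claim that $F_m(\cdot,s)\in L^2(\Gamma\backslash\mathcal H)$ for $\re(s)>1$ because its constant term is the decaying $y^{1-s}$ term of \eqref{eq:fm-fourier}. That inference is false: the $e(mx)$ Fourier mode of $F_m(z,s)$ contains the seed $f_m(z,s)$ itself, and $M_{0,s-\frac12}(4\pi|m|y)\sim \tfrac{\Gamma(2s)}{\Gamma(s)}e^{2\pi|m|y}$ as $y\to\infty$, so $F_m(\cdot,s)$ grows exponentially at the cusp and is not square-integrable. Consequently the displayed ``spectral expansion of $F_m$'' is not literally valid, and this is precisely the difficulty that the standard treatments (Fay's Theorem~3.4, Hejhal, Roelcke, and \cite{DIT-geom}) circumvent by working with the resolvent kernel rather than expanding $F_m$ directly. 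A clean repair along your lines: set $\tilde F_m(z,s)=F_m(z,s)-h(y)f_m(z,s)$ with $h$ a smooth cutoff supported high in the cusp; then $\tilde F_m\in L^2$ and $(\Delta_0+s(1-s))\tilde F_m=-[\Delta_0,h]f_m$ is compactly supported, so $\tilde F_m=-R(s)\big([\Delta_0,h]f_m\big)$ inherits the meromorphic continuation and the spectral poles from the resolvent $R(s)$, while the subtracted term $h(y)f_m(z,s)$ is entire in $s$ and contributes nothing to the residue. Two smaller points: the projection of $F_m$ onto the constant function is $0$ for $m\neq 0$ (the $x$-integral of $e(mx)$ vanishes), not a multiple of $\big((s-\tfrac12)^2+\tfrac14\big)^{-1}$; and since the residue is taken at a point on the line $\re(s)=\tfrac12$, the continuation of the Eisenstein integral genuinely requires deforming the $t$-contour past the poles at $t=\pm i(s-\tfrac12)$, which produces extra terms proportional to $E(z,s)$ and $E(z,1-s)$ --- these are regular at $s=\tfrac12+ir$ for $r>0$, so your conclusion stands, but the deformation cannot be waved away as ``continuing holomorphically without crossing a pole.''
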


In Proposition~5 of \cite{DIT-geom} the authors show that the cycle integrals of $F_m(z,s)$ and $\partial_z F_m(z,s)$ over finite geodesics yield weighted sums of Kloosterman sums. 
The next proposition is a complementary result that evaluates the cycle integrals over infinite geodesics, provided that we make a small modification to the integrand as in \cite{and-inf}.
Suppose that $Q=[a,b,c]\in \mathcal Q_D$ with $D$ a square and let $\mathfrak a_1,\mathfrak a_2$ be the rational projective solutions to $ax^2+bxy+cy^2=0$.
For each $j=1,2$ there is a unique $\gamma_j\in \Gamma_\infty\backslash \Gamma$ such that $\gamma_j \mathfrak a_j=\infty$, and we define
\begin{equation}
	F_{m,Q}(z,s) = \sum_{\substack{\gamma\in \Gamma_\infty \backslash \Gamma \\ \gamma \neq \gamma_1,\gamma_2}} f_m(\gamma z,s).
\end{equation}
Since $F_{m,\sigma Q}(z,s) = F_{m,Q}(\sigma z,s)$ for all $\sigma\in \Gamma$, the integrals 
\begin{equation} \label{eq:s-integrals}
	\int_{C_Q} F_{m,Q}(z,s) y^{-1} |dz| \qquad \text{ and } \qquad \int_{C_Q} \partial_z F_{m,Q}(z,s) \, dz
\end{equation}
are well-defined, assuming they converge.
To show convergence, using Lemma~\ref{lem:square} we may assume that $Q=[0,|d|,c]$ and $0\leq c<|d|$ for $D=d^2$.
Then we can take $\mathfrak a_1 = \infty$ and $\gamma_1 = I$.
The Fourier expansions \eqref{eq:eis-fourier} and \eqref{eq:fm-fourier} show that for $\re(s)>1$ the integrals in \eqref{eq:s-integrals} converge at $\infty$, and the observation
\[
	F_{m,Q}(\gamma_2^{-1}z,s) = \sum_{\gamma\neq I,\gamma_2^{-1}}f_m(\gamma z,s)
\]
shows that the integrals converge at $\mathfrak a_2$.
For $\re(s)>1$ and $d$ a fundamental discriminant, define
\begin{equation}
	T_m(d) = \sum_{Q\in \Gamma\backslash \mathcal Q_{d^2}} \chi_d(Q) 
		\begin{dcases}
			\int_{C_Q} F_{m,Q}(z,s) y^{-1} |dz| & \text{ if }d>0, \\
			\int_{C_Q} i \partial_z F_{m,Q}(z,s) \, dz & \text{ if }d<0.
		\end{dcases}
\end{equation}
Then we have the following analogue of Proposition~5 of \cite{DIT-geom}.

\begin{proposition} \label{prop:integrals}
	Let $m\geq 0$ and $\re(s)>1$.
	Suppose that $d$ is a fundamental discriminant.
	Then
	\begin{equation}
	T_m(d) = 
	\begin{dcases}
		6\pi^{1/2}|d|^{3/2}m \sum_{n\mid m} n^{-3/2} \pfrac dn \Phi^+\left(d,\tfrac {m^2}{n^2}d;\tfrac {2s+1}4\right) & \text{ if }m>0, \\
		\frac{\Gamma(\frac s2+\frac{1-\sgn d}{2})^2|d|^s L(s,\chi_d)^2}{\Gamma(s)\zeta(2s)} & \text{ if }m=0,
	\end{dcases}
	\end{equation}
	where, for $p,q\equiv 0,1\pmod{4}$ and $pq>0$ we have
	\begin{equation}
		\Phi^+(p,q,s) = \frac{\Gamma(s-\frac{\sgn p}{4})\Gamma(s-\frac{\sgn q}4)}{3\sqrt{\pi}\, 2^{2-2s}\Gamma(2s-\frac 12)} (pq)^{-\frac 12} \sum_{4\mid c>0} \frac{K^+(p,q,c)}{c} J_{2s-1}\pfrac{4\pi\sqrt{pq}}{c}.
	\end{equation}
	Here $K^+(p,q,c)$ is the half-integral weight Kloosterman sum
	\begin{equation}
		K^+(p,q,c) = (1-i)\sum_{d\bmod c} \pfrac cd \ep_d \, e\left(\frac{p\bar d+qd}{c}\right) \times
		\begin{cases}
			1 & \text{ if $c/4$ is even}, \\
			2 & \text{ if $c/4$ is odd},
		\end{cases}
	\end{equation}
	with $\ep_d = 1$ if $d\equiv 1\pmod{4}$ and $\ep_d=i$ if $d\equiv 3\pmod{4}$.
\end{proposition}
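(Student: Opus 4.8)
The plan is to evaluate $T_m(d)$ by integrating the Fourier expansions \eqref{eq:eis-fourier} and \eqref{eq:fm-fourier} of $F_m$ directly along the cycles, using the genus character to produce a Gauss sum and then recognizing the stated right-hand sides via the functional equation of $L(s,\chi_d)$ when $m=0$ and via an identity for twisted Kloosterman sums when $m>0$. First I would fix the representatives $Q=[0,|d|,c]$ with $0\le c<|d|$ from the second family in Lemma~\ref{lem:square}, so that $\chi_d(Q)=\pfrac dc$, the geodesic $S_Q$ is the vertical line $\re z=-c/|d|$ with $\mathfrak a_1=\infty$ and $\gamma_1=I$, and---since $D=d^2$ is a square and $\Gamma_Q$ is trivial---we have $C_Q=S_Q$. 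Parametrizing $S_Q$ by $z=-c/|d|+iy$ turns each cycle integral into $\int_0^\infty(\cdots)(-c/|d|+iy)\,\tfrac{dy}{y}$ when $d>0$, and the analogous integral of $i\partial_z F_{m,Q}\,dz$ when $d<0$.

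Next I would insert the Fourier expansion of $F_m$. Because $\gamma_1=I$, subtracting $f_m(\gamma_1 z,s)=f_m(z,s)$ removes the exponentially growing seed and leaves the constant term together with the Bessel part $2\sqrt y\sum_n\Phi(m,n;s)K_{s-\frac12}(2\pi|n|y)e(nx)$ (and the corresponding Eisenstein coefficients when $m=0$); subtracting $f_m(\gamma_2 z,s)$ is exactly what is needed to force convergence as $y\to 0$, i.e.\ at the second endpoint $\mathfrak a_2$. Integrating term by term, the factor $e(nx)$ contributes $e(-nc/|d|)$, and the radial integral $\int_0^\infty y^{-1/2}K_{s-\frac12}(2\pi|n|y)\,dy$ is a Mellin transform evaluating to Gamma factors, while the regularization coming from the subtracted $y^{1-s}$ and $f_m(\gamma_2 z,s)$ pieces supplies the remaining archimedean contribution.

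The arithmetic then separates cleanly. Summing over the representatives, $\sum_{c\bmod|d|}\pfrac dc e(-nc/|d|)$ is a quadratic Gauss sum equal to $\sqrt{|d|}\,\chi_d(n)$, up to the factor $i$ that appears when $d<0$. For $m=0$ the remaining Dirichlet series $\sum_{n\ge1}\chi_d(n)\,n^{s-1}\sigma_{1-2s}(n)$ factors as $L(s,\chi_d)L(1-s,\chi_d)$, and applying the functional equation of $L(s,\chi_d)$ converts $L(1-s,\chi_d)$ into $L(s,\chi_d)$ while simultaneously completing the archimedean factor to $\Gamma(\tfrac s2)^2/\Gamma(s)$ (shifted to $\Gamma(\tfrac s2+1)^2$ for the odd character when $d<0$) and producing the $|d|^s$ and the $\zeta(2s)^{-1}$ from $1/\Lambda(2s)$. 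For $m>0$ the coefficients $\Phi(m,n;s)$ carry weight-$0$ Kloosterman sums, and after the Gauss sum one is left with $\chi_d$-twisted sums $\sum_n\chi_d(n)K(m,n,c)(\cdots)$ over the Fourier index $n$.

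The main obstacle is this last identity: showing that the $\chi_d$-twisted weight-$0$ Kloosterman sums collapse into the half-integral weight sums $K^+(d,\tfrac{m^2}{n^2}d;\,\cdot)$ with exactly the normalization defining $\Phi^+$, together with the Hecke-type outer sum $m\sum_{n\mid m}n^{-3/2}\pfrac dn$. This requires opening the Kloosterman sum, evaluating the inner Gauss sum against $\chi_d$ by quadratic reciprocity, and carefully tracking the level-$4$ structure that forces $4\mid c$, the phase $\ep_d$, and the split between $c/4$ even and odd; the factorization $\tfrac{m^2}{n^2}$ in the second argument and the divisor sum over $n\mid m$ should emerge from the multiplicative identities relating $K(m,\cdot,c)$ to the half-integral weight sums. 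The parallel bookkeeping for $d<0$, where $i\partial_z$ shifts the weight of the seed and hence both the Gamma factors and the Gauss-sum phase, is the other place demanding careful sign-tracking.
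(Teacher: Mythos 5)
Your plan diverges from the paper's proof in a way that runs into two concrete obstructions. First, the term-by-term integration of the Fourier expansion of $F_m(z,s)$ along the vertical geodesics does not converge in the stated range $\re(s)>1$. Near the finite endpoint $\mathfrak a_2$ (i.e.\ as $y\to 0$ on the line $\re z=-c/|d|$), each Bessel term contributes $2\sqrt y\,K_{s-\frac12}(2\pi|n|y)\,y^{-1}\sim C\,y^{-s}$, and the constant term contributes $y^{1-s}\cdot y^{-1}=y^{-s}$, both of which are non-integrable at $0$ for $\re(s)\geq 1$. The subtraction of $f_m(\gamma_2 z,s)$ makes the \emph{aggregate} integral converge (that is exactly the point of the paper's definition of $F_{m,Q}$), but it is a single non-Fourier-diagonal term and cannot be distributed across the Fourier modes at $\infty$; so "the regularization supplies the remaining archimedean contribution" is not something you can implement mode by mode. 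Second, even granting convergence, the step you yourself flag as the main obstacle --- converting the $\chi_d$-twisted sums $\sum_n\chi_d(n)K(m,n,c)(\cdots)$ over the infinite Fourier index $n$ into the half-integral weight sums $K^+(d,\tfrac{m^2}{n^2}d;c)$ --- is not a routine Gauss-sum/reciprocity manipulation: it is essentially the arithmetic content of the proposition itself, and leaving it as "should emerge from multiplicative identities" leaves the proof without its core.

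The paper avoids both problems by unfolding rather than Fourier-expanding. For $d<0$ it writes $2i\partial_z F_{m,Q}$ as a weight-$2$ Poincar\'e series, chooses the representatives $[c,|d|,0]$ (the \emph{first} family in Lemma~\ref{lem:square}, not the vertical-geodesic family), and uses the bijection $(\gamma,Q)\mapsto\gamma Q$ from $\Gamma_\infty\backslash\Gamma\times\Gamma\backslash\mathcal Q_{d^2}$ to $\Gamma_\infty\backslash\mathcal Q_{d^2}$; the excluded cosets $\gamma_1,\gamma_2$ correspond precisely to the forms with $a=0$, so $T_m(d)$ becomes a sum over forms $[a,b,c]$ with $a>0$ of explicit integrals over semicircles. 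Each such integral evaluates to a single Bessel function $H_m(|d|/2a)$ (Lemma~7 of \cite{DIT-geom}), and the arithmetic is carried entirely by the \emph{finite} Sali\'e-type sums $\sum_{b(2a),\,b^2\equiv d^2(4a)}\chi_d([a,b,\cdot])\,e(-mb/2a)$, which Lemma~8 of \cite{DIT-geom} identifies with $K^+$. The cases $d>0$ are quoted from \cite{and-inf} (plus Lemma~4 of \cite{DIT-cycle} for $m=0$). If you want to salvage your route, you would need to split each cycle integral at a finite height, use the Fourier expansion of $F_{m,Q}$ at \emph{both} cusps $\infty$ and $\mathfrak a_2$, and then still prove the Kloosterman-sum identity --- at which point the unfolding argument is both shorter and already written down.
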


\begin{proof}
	When $d,m>0$ this is (4.4) of \cite{and-inf} (see also Proposition~4 of that paper).
	For the case $d>0,m=0$, (4.4) of \cite{and-inf} reads
	\begin{equation}
		T_0(d) = \frac{\Gamma(\frac s2)^2}{4\Gamma(s)} d^s L(s,\chi_d) \sum_{c=1}^\infty \frac{K^+(d,0;4c)}{c^{s+1/2}}.
	\end{equation}
	By Lemma~4 of \cite{DIT-cycle} we have
	\begin{equation}
		T_0(d) = \frac{\Gamma(\frac s2)^2 d^s L(s,\chi_d)^2}{\Gamma(s)\zeta(2s)}.
	\end{equation}

	Now assume that $d<0$.
	We will closely follow the proof of Proposition~4 of \cite{and-inf}.
	From the proof of Lemma~5 of \cite{DIT-geom} (see (9.2) especially) we have
	\begin{equation}
		2i\partial_z F_{m,Q}(z,s) = \sum_{\substack{\gamma\in \Gamma_\infty \backslash \Gamma \\ \gamma \neq \gamma_1,\gamma_2}} f_{2,m}(\gamma z,s) \frac{d(\gamma z)}{dz},
	\end{equation}
	where $f_{2,m}(z,s) = \phi_{2,m}(y,s)e(mx)$ and
	\begin{equation}
		\phi_{2,m}(y,s) = 
		\begin{cases}
			 sy^{s-1} & \text{ if }m=0, \\
			 sm^{-1/2} (2\pi y)^{-1} \frac{\Gamma(s)}{\Gamma(2s)} M_{1,s-\frac 12}(4\pi my) & \text{ if }m > 0.
		\end{cases}
	\end{equation}
	We choose representatives $[c,|d|,0]$ for $\Gamma\backslash \mathcal Q_{d^2}$ as in Lemma~\ref{lem:square} so that $\chi_d(Q) = \pfrac dc$.
	Since $\pfrac dc=0$ when $\gcd(c,d)>1$ we can restrict the sum to those $c$ which are coprime to $d$.
	For $Q=[c,|d|,0]$ with $\gcd(c,d)=1$ we have
	\begin{equation}
		\mathfrak a_1 = (0:1), \gamma_1 = \pmatrix 0{-1}10, \quad \mathfrak a_2 = (d:c), \gamma_2 = \pmatrix ab{c}{|d|},
	\end{equation}
	for some $a,b\in \Z$.
	Thus
	\begin{equation}
		T_m(d) = \frac 12 \sum_{\substack{c\bmod |d|\\ \gcd(c,d)=1 \\Q=[c,|d|,0]}} \pfrac{d}{c} \sum_{\substack{\gamma\in \Gamma_\infty \backslash \Gamma \\ \gamma \neq \gamma_1,\gamma_2}} \int_{C_{\gamma Q}} e(mx) \phi_{2,m}(y,s) \, dz.
	\end{equation}
	The map $(\gamma,Q)\mapsto \gamma Q$ is a bijection from $\Gamma_\infty\backslash \Gamma \times \Gamma\backslash \mathcal Q_{d^2}$ to $\Gamma_\infty \backslash \mathcal Q_{d^2}$ which sends $(\Gamma_\infty\gamma_1,[c,|d|,0])$ to $[0,d,c+d\Z]$ and $(\Gamma_\infty\gamma_2,[c,|d|,0])$ to $[0,|d|,-b+d\Z]$.
	It follows that
	\begin{equation}
		T_m(d) = \frac 12 \sum_{\substack{Q\in \Gamma_\infty \backslash \mathcal Q_{d^2} \\ Q=[a,b,c],a\neq 0}} \chi_d(Q) \int_{C_Q} e(mx) \phi_{2,m}(y,s) \, dz.
	\end{equation}
	Since $\chi_d(-Q) = -\chi_d(Q)$ and the geodesic $C_{-Q}$ is the same set as $C_Q$ but with opposite orientation, we have
	\begin{equation}
		T_m(d) = \sum_{\substack{Q\in \Gamma_\infty \backslash \mathcal Q_{d^2} \\ Q=[a,b,c], a>0}} \chi_d(Q) \int_{C_Q} e(mx) \phi_{2,m}(y,s) \, dz.
	\end{equation}
	Each cycle $C_Q$ with $Q=[a,b,c]$ and $a>0$ can be parametrized by
	\begin{equation}
		z  = \re z_Q - e^{-i\theta} \im z_Q, \qquad 0\leq \theta \leq \pi,
	\end{equation}
	where
	\begin{equation}
		z_Q = -\frac{b}{2a} + i\frac{|d|}{2a}
	\end{equation}
	is the apex of the geodesic.
	Thus
	\begin{equation} \label{eq:phi-integral}
		\int_{C_Q} e(mx)\phi_{2,m}(y,s) \, dz = e\pfrac{-mb}{2a} H_m\pfrac{|d|}{2a},
	\end{equation}
	where
	\begin{equation}
		H_m(t) = it \int_0^\pi e(-mt\cos\theta) \phi_{2,m}(t\sin\theta,s)e^{-i\theta} \, d\theta.
	\end{equation}
	It follows that
	\begin{equation}
		T_m(d) = \sum_{a=1}^\infty H_m\pfrac{|d|}{2a} \sum_{\substack{b(2a) \\ b^2\equiv d^2(4a)}} \chi_d\left(\left[a,b,\tfrac{b^2-d^2}{4a}\right]\right) e\pfrac{-mb}{2a}.
	\end{equation}
	By Lemma~7 of \cite{DIT-geom} we have
	\begin{equation}
		H_m(t) = \frac{2\sqrt\pi \Gamma(\frac{s+1}{2}) t^{1/2}}{\Gamma(\frac s2)} J_{s-\frac 12} (2\pi|m|t)
	\end{equation}
	when $m\neq 0$, while when $m=0$ we have by \cite[(5.12.2)]{dlmf} that 
	\begin{equation}
		H_0(t) = 2\sqrt{\pi} t^{s} \frac{\Gamma(\frac{s+1}{2})}{\Gamma(\frac s2)}.
	\end{equation}
	The result follows after using Lemma~8 of \cite{DIT-geom}.
\end{proof}

We would like to apply Proposition~\ref{prop:residue} to the integrals appearing in Proposition~\ref{prop:integrals}, but the integrals in \eqref{eq:s-integrals} do not converge for $\re(s)=\frac 12$. 
However, the integrals
\begin{equation}
	\int_{C_Q} \left(F_{m,Q}(z,s) - c(s)F_{0,Q}(z,s) \right) y^{-1} |dz| \quad \text{ and } \quad \int_{C_Q} \partial_z\left( F_{m,Q}(z,s) - c(s)F_{0,Q}(z,s)\right) \, dz,
\end{equation}
where
\begin{equation}
	c(s) = \frac{2|m|^{1/2-s}\sigma_{2s-1}(|m|)}{(2s-1)\Lambda(2s-1)},
\end{equation}
do converge for $\re(s)>0$, as long as $s$ is not one of the poles of the integrands.
This is because the coefficient of $y^{1-s}$ equals zero in the Fourier expansion of $F_{m,Q}(z,s) - c(s)F_{0,Q}(z,s)$ at the cusps corresponding to the endpoints of $C_Q$.
Note that by Proposition~\ref{prop:residue} we have
\begin{align*}
	\Res_{s=\frac 12+ir} (2s-1)\left(F_{m,Q}(z,s) - c(s)F_{0,Q}(z,s)\right) 
	&= \Res_{s=\frac 12+ir} (2s-1) F_{m}(z,s) \\
	&= \sum_{\varphi\in \mathcal B_r} \langle \varphi,\varphi \rangle^{-1} 2a_\varphi(m)\varphi(z)
\end{align*}
because $c(s)F_{0,Q}(z,s)$ is analytic at $s=\frac 12+ir$, $r\neq 0$, and $f_m(\gamma_j z,s)$ is analytic for $s\in \C$.
So we get the next result by following the proof of Proposition~6 of \cite{DIT-geom} with only minor changes.

\begin{proposition} \label{prop:bd-bdp}
	For any even Hecke--Maass cusp form $\varphi\in \mathcal U_r$ there is a unique Hecke--Maass cusp form $\psi\in \mathcal V_r$ such that $\varphi$ is the Shimura lift of $\psi$ and such that for any fundamental discriminant $d$ we have
	\begin{equation}
		12\pi^{1/2}|d|^{\frac 32}|b_{\psi}(d)|^2 = \frac{1}{\langle \varphi,\varphi \rangle} \sum_{Q\in \Gamma\backslash\mathcal Q_{d^2}} \chi_d(Q)
		\begin{dcases}
			\int_{C_Q} \varphi(z) y^{-1}\, |dz| & \text{ if }d>0,\\
			\int_{C_Q} i\partial_z \varphi(z) \, dz & \text{ if }d<0.
		\end{dcases}
	\end{equation}
\end{proposition}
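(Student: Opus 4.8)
\emph{Proof proposal.} The plan is to fix an integer $m\ge 1$, compute the residue at $s=\tfrac12+ir$ of $(2s-1)$ times the \emph{regularized} cycle integral in two ways, and equate them. Write $\tilde T_m(d)$ for the quantity obtained from $T_m(d)$ by replacing the integrand $F_{m,Q}(z,s)$ with $F_{m,Q}(z,s)-c(s)F_{0,Q}(z,s)$. By the discussion preceding the statement this converges for $\re(s)>0$ away from poles, and for $\re(s)>1$ it equals $T_m(d)-c(s)T_0(d)$, so $\tilde T_m(d)$ inherits the meromorphic continuation of $T_m(d)$ and $T_0(d)$.

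For the spectral (geometric) side I would invoke the residue identity established just above the statement,
\begin{equation}
\Res_{s=\frac12+ir}(2s-1)\bigl(F_{m,Q}(z,s)-c(s)F_{0,Q}(z,s)\bigr)=\sum_{\varphi\in\mathcal B_r}\langle\varphi,\varphi\rangle^{-1}2a_\varphi(m)\varphi(z),
\end{equation}
together with the fact that the regularized Fourier expansion carries no $y^{1-s}$ term at the cusps bounding $C_Q$, so that the cycle integrals converge uniformly near $s=\tfrac12+ir$ and one may interchange $\Res$ with $\int_{C_Q}$. Writing $I_\varphi(d)=\sum_Q\chi_d(Q)\int_{C_Q}\varphi\,y^{-1}|dz|$ if $d>0$ and $I_\varphi(d)=\sum_Q\chi_d(Q)\int_{C_Q}i\partial_z\varphi\,dz$ if $d<0$, this gives
\begin{equation}
\Res_{s=\frac12+ir}(2s-1)\tilde T_m(d)=\sum_{\varphi\in\mathcal B_r}\langle\varphi,\varphi\rangle^{-1}2a_\varphi(m)\,I_\varphi(d).
\end{equation}

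For the arithmetic side, $c(s)T_0(d)$ is analytic at $s=\tfrac12+ir$ (the factor $c(s)$ is analytic there, and $T_0(d)$ equals the $L$-function expression of Proposition~\ref{prop:integrals}, analytic for $r\neq0$), so the residue of $(2s-1)\tilde T_m(d)$ equals that of $(2s-1)T_m(d)$. Applying the $m>0$ case of Proposition~\ref{prop:integrals} together with the half-integral weight spectral expansion --- the analogue of Proposition~\ref{prop:residue} for $\mathcal V_r$, which yields $\Res_{s=\frac12+ir}(2s-1)\Phi^+(d,q;\tfrac{2s+1}4)=\kappa\sum_{\psi\in\mathcal V_r}\langle\psi,\psi\rangle^{-1}\overline{b_\psi(d)}\,b_\psi(q)$ for an explicit constant $\kappa$ --- I would obtain
\begin{equation}
\Res_{s=\frac12+ir}(2s-1)T_m(d)=6\kappa\pi^{1/2}|d|^{3/2}\sum_{\psi\in\mathcal V_r}\langle\psi,\psi\rangle^{-1}\overline{b_\psi(d)}\;m\sum_{n\mid m}n^{-3/2}\pfrac dn b_\psi(m^2 d/n^2).
\end{equation}
The inner sum is exactly the right-hand side of the Shimura relation $a_\varphi(m)b_\psi(d)=m\sum_{n\mid m}n^{-3/2}\pfrac dn b_\psi(m^2 d/n^2)$, so it collapses to $a_\varphi(m)b_\psi(d)$, where $\varphi$ is the Shimura lift of $\psi$, leaving $6\kappa\pi^{1/2}|d|^{3/2}\sum_\psi\langle\psi,\psi\rangle^{-1}a_\varphi(m)|b_\psi(d)|^2$.

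Finally I would match the two residue expressions. Both are finite sums over the Hecke basis that are linear in the eigenvalue sequence $(a_\varphi(m))_m$; using the bijection $\psi\leftrightarrow\varphi$ between $\mathcal V_r$ and $\mathcal B_r$ furnished by the Shimura lift (Baruch--Mao), the identity holds for all $m\ge1$, and linear independence of the sequences $(a_\varphi(m))_m$ attached to distinct Hecke eigenforms forces equality of the coefficient of each $a_\varphi(m)$. Setting $\langle\psi,\psi\rangle=1$ and inserting the value of $\kappa$ dictated by the weight $1/2$ residue, the matched coefficients read $2\langle\varphi,\varphi\rangle^{-1}I_\varphi(d)=6\kappa\pi^{1/2}|d|^{3/2}|b_\psi(d)|^2$, which is the asserted identity once the constants are checked to produce $12\pi^{1/2}$. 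The main obstacle is not the algebra but the two analytic inputs the regularization is built to provide: justifying that subtracting the Eisenstein term $c(s)F_{0,Q}$ kills the divergence of the integral over the \emph{infinite} geodesics so that $\Res$ and $\int_{C_Q}$ genuinely commute, and pinning down the constant $\kappa$ in the half-integral weight residue so that all normalizations line up exactly.
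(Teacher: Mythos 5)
Your proposal is correct and follows essentially the same route as the paper, which only sketches this step by deferring to the proof of Proposition~6 of \cite{DIT-geom}: regularize by subtracting $c(s)F_{0,Q}$, compute $\Res_{s=\frac12+ir}(2s-1)$ of the cycle integral in two ways (the weight $0$ spectral expansion of Proposition~\ref{prop:residue} on one side; the weight $1/2$ Kloosterman-sum residue formula, the Shimura relation, and linear independence of Hecke eigenvalues on the other), and match coefficients. The two inputs you flag --- commuting $\Res$ with $\int_{C_Q}$ after the regularization, and the constant $\kappa$ in the weight $1/2$ residue formula --- are precisely what the discussion preceding the proposition and the cited Proposition~6 of \cite{DIT-geom} supply.
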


\begin{proof}[Proof of Theorem~\ref{thm:main}]
	Suppose that $d$ is a fundamental discriminant.
	Then by Lemma~\ref{lem:square} the quadratic forms $[0,|d|,c]$ with $0\leq c<|d|$ form a complete set of representatives for $\Gamma\backslash \mathcal Q_{d^2}$ and
	\begin{equation}
		\chi_d([0,|d|,c]) = \pfrac dc.
	\end{equation}

	Suppose first that $d>0$.
	If $\re(s)>1$ then
	\begin{align}
		\sum_{Q\in \Gamma\backslash \mathcal Q_D} \chi_d(Q) \int_{C_Q} \varphi(z) y^{s-1}\, |dz| 
		&= \sum_{c\bmod d} \pfrac dc \int_0^\infty \varphi(-\tfrac cd+iy) y^{s-1} dy \\
		&= 2\sum_{n\neq 0} a_\varphi(n) G(-n,d) \int_0^\infty y^{s-\frac 12}K_{ir}(2\pi|n|y) \, dy,
	\end{align}
	where $G(n,d)$ is the Gauss sum
	\begin{equation}
		G(n,d) = \sum_{c\bmod |d|} \chi_d(c) e\pfrac{nc}{|d|} = \chi_d(n) \sqrt{|d|} \times
		\begin{cases}
			1 & \text{ if }d>0, \\
			i & \text{ if }d<0.
		\end{cases}
	\end{equation}
	By \cite[(10.43.19)]{dlmf} we have
	\begin{equation} \label{eq:K-integral}
		\int_0^\infty y^{s-\frac 12}K_{ir}(2\pi|n|y) \, dy = \tfrac 14 (\pi|n|)^{-s-\frac 12} \Gamma(\tfrac s2+\tfrac{ir}{2} + \tfrac 14) \Gamma(\tfrac s2-\tfrac{ir}{2} + \tfrac 14).
	\end{equation}
	Thus, using that $a_\varphi(n) = a_\varphi(-n)$ we find that
	\begin{equation}
		\sum_{Q\in \Gamma\backslash \mathcal Q_D} \chi_d(Q) \int_{C_Q} \varphi(z) y^{s-1}\, |dz| = \pi^{-s-\frac 12} \sqrt{d} \, \Gamma(\tfrac s2+\tfrac {ir}2+\tfrac 14)\Gamma(\tfrac s2-\tfrac {ir}2+\tfrac 14) L(s+\tfrac 12,\varphi\times\chi_d).
	\end{equation}
	Setting $s=0$ and using Proposition~\ref{prop:bd-bdp} we get \eqref{eq:kohnen-zagier-formula}.

	Now suppose that $d<0$.
	A computation involving \cite[\S10.29, (10.30.2), and (10.40.2)]{dlmf} shows that
	\begin{equation}
		\partial_z \left[\sqrt y K_{ir}(2\pi|n|y)e(nx)\right] = \pi i n \sqrt y K_{ir}(2\pi |n|y)e(nx) + g(n,y)e(nx)
	\end{equation}
	for some function $g(n,y)$ which satisfies $g(-n,y)=g(n,y)$ and $g(n,y)\ll |n|^{1/2}e^{-2\pi|n|y}$ as $|n|y\to\infty$ and $g(n,y)\ll_n y^{-1/2}$ as $y\to 0$.
	So if $\re(s)>1$ we have
	\begin{equation}
		\sum_{Q\in \Gamma\backslash \mathcal Q_D} \chi_d(Q) \int_{C_Q} i\partial_z\varphi(z) y^{s}\, dz = -2\pi i\sum_{n\neq 0}n a_\varphi(n) G(n,d) \int_0^\infty y^{s+\frac 12}K_{ir}(2\pi |n|y) \, dy
	\end{equation}
	because $a_\varphi(-n)G(-n,d)g(-n,y) = -a_\varphi(n)G(n,d)g(n,y)$.
	Again using \eqref{eq:K-integral} we find that
	\begin{equation}
		\sum_{Q\in \Gamma\backslash \mathcal Q_D} \chi_d(Q) \int_{C_Q} i\partial_z\varphi(z) y^{s}\, dz = \pi^{-s-\frac 12} \sqrt{|d|} \, \Gamma(\tfrac s2+\tfrac {ir}2+\tfrac 34)\Gamma(\tfrac s2-\tfrac {ir}2+\tfrac 34) L(s+\tfrac 12,\varphi\times\chi_d).
	\end{equation}
	The result follows as in the previous case.
\end{proof}

\bibliographystyle{plain}
\bibliography{walds-bib}

\end{document}